%
\documentclass[12pt]{amsart}
\usepackage{amscd,amssymb,amsfonts}
\usepackage{graphicx}
\usepackage[mathscr]{eucal}

\newtheorem{thm}{Theorem}[section]
\newtheorem{lemma}[thm]{Lemma}
\newtheorem{conj}[thm]{Conjecture}

\theoremstyle{definition}
\newtheorem{defn}[thm]{Definition}

\newtheorem{rem}[thm]{Remark}

\typeout{theoremstyles defined}

\newcommand{\R}{{\mathbb R}}


\newcommand{\E}[1]{\mathsf E(#1)}
\newcommand{\SH}[2]{\mathsf{SH}_{#1}(#2)}
\newcommand{\EK}[2]{\mathsf E_{#1}(#2)}

\newcommand{\D}[2]{\Delta_{#1}(#2)}

\newcommand{\HP}{H^{2}}

\newcommand{\BD}{\HP\times\HP}

\newcommand{\HD}{d_H}
\newcommand{\HDS}{{\HD}^2}
\newcommand{\BHD}{\rho}
\newcommand{\BHDS}{\rho^2}

\newcommand{\PSL}{{\operatorname{PSL}(2,\R)}}

\newcommand{\PGL}{{\operatorname{PGL}(2,\R)}}

\newcommand{\isom}[1]{\operatorname{Isom}(#1)}
\newcommand{\isoh}{\isom{\HP}}
\newcommand{\isob}{\isom{\BD}}


\renewcommand{\H}{\mathcal H}


\newcommand{\vz}{\mathsf{z}}
\newcommand{\vw}{\mathsf{w}}

\newcommand{\vy}{\mathsf{y}}
\newcommand{\vx}{\mathsf{x}}

\newcommand{\bg}{\gamma}

\newcommand{\bi}{\iota}

\newcommand{\BG}{\Gamma}


\begin{document}

\title{Equidistant hypersurfaces of the bidisk}

\author[Charette]{Virginie Charette}
  \address{D\'epartement de math\'ematiques\\ Universit\'e de Sherbrooke\\
  Sherbrooke, Quebec, Canada}
  \email{v.charette@usherbrooke.ca}

\author[Drumm]{Todd A.\ Drumm}
\address{Department of Mathematics\\ Howard University\\
     Washington, DC 20059  USA }
    \email{tdrumm@howard.edu}

\author[Lareau-Dussault]{Rosemonde Lareau-Dussault}
  \address{D\'epartement de math\'ematiques\\ Universit\'e de Sherbrooke\\
  Sherbrooke, Quebec, Canada}
  \email{r.lareau@usherbrooke.ca}

\begin{abstract}
The following are notes on the geometry of the bidisk,
$\BD$. In particular, we examine the properties of
equidistant surfaces in the bidisk.
\end{abstract}

\thanks{Charette gratefully acknowledges partial support from the
  Natural Sciences and Engineering Research Council of Canada.
 }

\maketitle


\section{Introduction}

The {\em bidisk} is the product of two copies of the hyperbolic plane.  While
the bidisk has been often mentioned in the literature, notably as a basic example in the theory of
symmetric spaces, it would seem that few papers are devoted to its
geometry in the strictest sense.  In this vein we might for instance point out the
paper by Eskin and Farb~\cite{EF}, which considered quasi-flats in the bidisk.

Our interest lies in fundamental polyhedra for groups of isometries that act 
properly discontinuously on the bidisk.  One question is, how many sides can a 
Dirichlet domain for a cyclic group have?  The motivation for this question arises 
from related work of J{\o}rgensen  on hyperbolic 3-space \cite{J}  (see Drumm-Poritz \cite{DP} 
for an interactive treatment), and by Phillips's work on complex hyperbolic space \cite{Ph}. 
Of related interest is a paper by Ehrlich and Im Hof~\cite{EI}, 
which describes some basic properties of Dirichlet domains in Riemannian manifolds without conjugate points.  

The present paper focuses on cyclic groups generated by the product of two hyperbolic 
isometries.  We prove that a Dirichlet domain with basepoint in the invariant flat 
necessarily has two faces.  However, if one chooses a point outside that flat, the 
Dirichlet domain need not be two-faced.

In the hyperbolic plane, every Dirichlet domain for a cyclic Fuchsian 
group is two-faced.  The same is not true, however, in hyperbolic 3-space -- as a matter of 
fact, there is no bound on the possible number of faces~\cite{J,DP}. 

The main part of the present paper is devoted to the description of {\em
equidistant hypersurfaces}, which bound Dirichlet domains.  These
three-dimensional manifolds display rich geometrical properties.  For instance
they are not totally geodesic, thus complicating the analysis of their
intersections.  This is reminiscent of the situation for {\em bisectors} in the
complex hyperbolic plane.  (See for example~\cite{Go99}.)  On the other hand,
equidistant hypersurfaces admit a foliation by products of {\em square
hyperbolae}, which are curves in the hyperbolic plane.  As we will show,
understanding how equidistant hypersurfaces intersect boils down to
understanding how square hyperbolae intersect.

The paper is organized as follows: \S\ref{sec:isom} introduces the bidisk and
describes its isometries; \S\ref{sec:ehs} describes equidistant hypersurfaces
and their foliation by products of square hyperbolae; finally, in
\S\ref{sec:Dirichlet}, after defining Dirichlet domains, we prove the main
theorem about two-faced domains.  We close with a discussion about Dirichlet domains centered
about points which do not lie on an invariant axis.

\subsection*{Acknowledgements.}  The authors thank Bill Goldman, John Parker and Deane Yang for several enlightening discussions.  We would also like 
to thank the anonymous referee for several helpful suggestions.

\section{The bidisk and its isometries}\label{sec:isom}

Given a Riemannian space $X$, denote its isometry group by $\isom{X}$.  Since
the isotropy subgroup of any point in $X$ is compact, a discrete subgroup
$G<\isom{X}$ acts properly discontinuously on $X$. 

Let $\HP$ denote the hyperbolic plane.  As its geodesics are isometric to Euclidean straight lines,
we will call them {\em straight lines} or simply {\em lines}.  In the upper half-plane model,   
$\isoh=\PGL$ and the subgroup of orientation preserving isometries is $\PSL$. 
Recall that an isometry $g\neq id \in\PSL$ belongs to one of three types:
\begin{itemize}
\item $g$ is {\em hyperbolic} if it fixes two points on the ideal boundary of
$\HP$;
\item $g$ is {\em parabolic} if it fixes a single point on the ideal boundary of
$\HP$;
\item $g$ is {\em elliptic} if it fixes a single point in $\HP$.
\end{itemize}
If $g$ is hyperbolic, it admits an invariant axis, namely, the straight line determined by  
its two fixed points.  The invariant curve for parabolic $g$ is a horocycle.

The distance between a pair of points $z,w\in\HP$  can be written as follows:
\begin{equation*}
\HD(z,w)=2\tanh^{-1}\frac{\mid z-w\mid}{\mid z-\overline{w}\mid}
\end{equation*}

\begin{rem}\label{rem:equiplane}
If $g\in\isoh$ is either hyperbolic or parabolic, then for any $z\in\HP$, the
locus of points at equal distance to $z$ and $g(z)$ is a straight line, called the {\em equidistant line}, that is
perpendicular to the $g$-invariant curve.  Moreover, the equidistant line
between $z$ and $g(z)$ is disjoint from the equidistant line between $g(z)$ and
$g^2(z)$.
\end{rem}

The bidisk is $\BD$.  Endow the bidisk with the standard (product) Riemannian
metric:
\begin{equation*}
\BHD(\vz,\vw)=\sqrt{\HD(z_1,w_1)^2+\HD(z_2,w_2)^2}
\end{equation*}

\begin{defn}
For any $z\in\HP$, a surface $\HP\times\{z\}$ will be called a {\em horizontal
plane} and a surface $\{z\}\times\HP$, a {\em vertical plane}.
\end{defn}

At any point in the bidisk, the sectional curvature of any plane is between $-1$
and $0$, and is $-1$ 
if and only if the plane is horizontal or vertical.  In particular, $\BD$ is a
{\em Hadamard manifold}.

We now describe $\isob$, the group of isometries of the bidisk.  Set $\BG=\isoh\times\isoh$. 
 Then $\BG$ acts by isometries on the bidisk via the product action:
\begin{equation*}
(g_1,g_2)(z_1,z_2)=(g_1(z_1),g_2(z_2))
\end{equation*}
where $g_i\in\isoh$ and $z_i\in\HP$.   

Set $\bi$ to be the involution that permutes the coordinates:
\begin{align*}
\bi:\BD & \longrightarrow\BD\\
(z_1,z_2) & \longmapsto (z_2,z_1)
\end{align*}
The map $\bi$ is an isometry which normalizes $\BG$.

\begin{thm}
 The group $\isob$ is $\BG\rtimes\langle\bi\rangle$.
\end{thm}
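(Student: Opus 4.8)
The plan is to reduce everything to the structure of $\BD$ as a product of two copies of $\HP$ of curvature $-1$. One inclusion is immediate from the excerpt: $\BG$ acts by isometries, $\bi$ is an isometry with $\bi^{2}=id$ normalizing $\BG$, and $\bi\notin\BG$ since $\bi$ is not a product map (if $(z_{1},z_{2})\mapsto(z_{2},z_{1})$ were of the form $(g_{1}(z_{1}),g_{2}(z_{2}))$, we would need $g_{1}(z_{1})=z_{2}$ for all $z_{1},z_{2}$, which is absurd). So $\BG\rtimes\langle\bi\rangle$ is a genuine subgroup of $\isob$, and what remains is to show that every $\Phi\in\isob$ lies in $\BG\cup\BG\bi$. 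I will argue that $\Phi$ must preserve or interchange the foliations of $\BD$ by horizontal planes and by vertical planes, and that when it preserves them it is a product map.

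First I would record the infinitesimal picture. An isometry preserves the curvature tensor, hence the sectional curvature of every tangent $2$-plane. At $x=(p,q)$, let $H_{x}$ and $V_{x}$ be the tangent $2$-planes at $x$ of the horizontal plane $\HP\times\{q\}$ and of the vertical plane $\{p\}\times\HP$; these are orthogonal with $T_{x}\BD=H_{x}\oplus V_{x}$. By the curvature statement recalled above, a tangent $2$-plane at $x$ has sectional curvature $-1$ exactly when it equals $H_{x}$ or $V_{x}$. Since $d\Phi_{x}$ is a linear isometry carrying $2$-planes of curvature $-1$ to $2$-planes of curvature $-1$, it maps $\{H_{x},V_{x}\}$ onto $\{H_{\Phi(x)},V_{\Phi(x)}\}$; and as $d\Phi_{x}$ is orthogonal with $H_{x}=V_{x}^{\perp}$, either $d\Phi_{x}(H_{x})=H_{\Phi(x)}$ and $d\Phi_{x}(V_{x})=V_{\Phi(x)}$, or $d\Phi_{x}$ exchanges the two.

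Next I would globalize this alternative. Each horizontal or vertical plane is a complete totally geodesic surface, and in the Hadamard manifold $\BD$ such a surface is determined by any one of its points and its tangent plane there (via $\exp$). Hence $\Phi$ sends the vertical plane through $x$ to a complete totally geodesic surface through $\Phi(x)$ tangent to $d\Phi_{x}(V_{x})\in\{H_{\Phi(x)},V_{\Phi(x)}\}$, i.e.\ to the horizontal or the vertical plane through $\Phi(x)$. The crucial observation is that two distinct vertical planes are disjoint, so their $\Phi$-images are disjoint, whereas a horizontal plane always meets a vertical plane; so $\Phi$ cannot send one vertical plane to a horizontal plane and another to a vertical plane. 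Therefore either all vertical planes map to vertical planes (Case A), or all map to horizontal planes (Case B). A short computation with $d(\bi\circ\Phi)$ shows that in Case B the isometry $\bi\circ\Phi$ is of type A, and since $\bi$ normalizes $\BG$ it then suffices to handle Case A.

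In Case A one has $d\Phi_{x}(V_{x})=V_{\Phi(x)}$ for all $x$, hence also $d\Phi_{x}(H_{x})=V_{\Phi(x)}^{\perp}=H_{\Phi(x)}$, so $\Phi$ permutes horizontal planes as well. Defining $\phi,\psi\colon\HP\to\HP$ by $\Phi(\{p\}\times\HP)=\{\phi(p)\}\times\HP$ and $\Phi(\HP\times\{q\})=\HP\times\{\psi(q)\}$ and intersecting, one gets $\Phi(p,q)=(\phi(p),\psi(q))$. Since $\BHD$ restricted to a horizontal plane is $\HD$ in the first coordinate and $\Phi$ maps horizontal planes isometrically onto horizontal planes, $\phi$ is a distance-preserving bijection of $\HP$, hence $\phi\in\isoh$; similarly $\psi\in\isoh$, so $\Phi=(\phi,\psi)\in\BG$. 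Together with Case B this gives $\isob=\BG\cup\BG\bi=\BG\langle\bi\rangle$, which with $\bi\notin\BG$, $\bi^{2}=id$ and $\bi$ normalizing $\BG$ yields $\isob=\BG\rtimes\langle\bi\rangle$. The main obstacle I anticipate is exactly the globalization step — turning the pointwise dichotomy between $H$ and $V$ into one global choice — and the disjointness/intersection argument for vertical and horizontal planes is what resolves it.
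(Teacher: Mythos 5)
Your proof is correct and follows essentially the same route as the paper: use the fact that sectional curvature $-1$ singles out the horizontal and vertical tangent planes, compose with $\bi$ if the two foliations are swapped, and then deduce that the remaining isometry is a product of isometries of $\HP$. The only difference is that you spell out the globalization step (why the pointwise dichotomy between $H_x$ and $V_x$ is the same at every point, via the disjointness of vertical planes), which the paper's proof leaves implicit.
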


\begin{proof}
Let $\bg$ be an arbitrary isometry of the bidisk.   If $P$ is a horizontal
plane, then $\bg(P)$ must have sectional curvature $-1$ and thus be a horizontal
or vertical plane. 
 Therefore either $\bg$ or $\bi\circ\bg$ maps horizontal planes to horizontal
planes, 
and vertical planes to vertical planes. Assume without loss of generality that
$\bg$ does.

For any $(x,y)\in\BD$, write $\bg(x,y)=(x',y')$.  Let $x_1,x_2,y\in\HP$ be
arbitrary points.  Since $(x_1,y)$, $(x_2,y)$ belong to the same horizontal
plane, so do their images and thus:
\begin{equation*}
 \HD(x_1,x_2)=\BHD((x_1,y),(x_2,y))=\BHD((x_1',y'),(x_2',y'))=\HD(x_1',x_2')
\end{equation*}
Consequently, the projection of $\bg$ onto the first factor, $x\mapsto x'$, is an
isometry of $\HP$. 

In the same way, considering vertical planes, the projection onto the second
factor is also an isometry of $\HP$.  
Thus $\bg=(g_1,g_2)\in\BG$ (or $\bi\circ\bg\in\BG$) as required.

\end{proof}

One interesting consequence is that while the bidisk is a homogeneous space, it
is not isotropic or equivalently (as it is a Riemannian space), two-point homogeneous.  
Concretely, $\isob$ acts transitively on $\BD$, but no isometry will map, say,
$(x_1,y)$ to itself and $(x_2,y)$ to $(x_2',y')$,
even if the distances are equal, unless $y=y'$.  

\subsection{Flats}

A {\em flat} is a two-dimensional totally geodesic surface that is isometric to the Euclidean
plane.  (Thus it is a {\em maximal flat} in the usual sense, but we will simply call it a flat.)

The only flats in the bidisk are of the form $l_1\times l_2$, where
$l_i\subset\HP$ is a straight line.  As a consequence, one can show that the bidisk is a rank two symmetric space.  
(That the rank is two corresponds to the failure of two-point homogeneity; see for instance~\cite{H}.)

\section{Equidistant hypersurfaces}\label{sec:ehs}

For $\vz,\vw\in\BD$, the {\em equidistant hypersurface} between $\vz $ and $\vw$
is the set of all points $\vx\in\BD$ whose distance from $\vz$ is equal to its
distance from $\vw$:
\begin{equation*}
\E{\vz,\vw} = \{\vx\in\BD~\mid~\BHD(\vx,\vz)=\BHD(\vx,\vw)\}
\end{equation*}
The simplest case is when $\vz=(z_1,z_2)$ and $\vw=(w_1,w_2)$ with either $z_1=w_1$ or
$z_2=w_2$.  Then $\E{\vz,\vw}$ is simply the product of $\HP$ with a straight line.  

Rewrite the equation for an equidistant hypersurface as follows.  Let
$\vz=(z_1,z_2)$ and $\vw=(w_1,w_2)\in\BD$; assume now that $z_i\neq w_i$.  Then $\vx=(x_1,x_2)\in\E{\vz,\vw}$
if and only if $\BHDS(\vx,\vz)=\BHDS(\vx,\vw)$, which is equivalent to :
\begin{equation*}
\HDS(x_1,z_1)-\HDS(x_1,w_1) = \HDS(x_2,w_2)-\HDS(x_2,z_2)
\end{equation*}
This suggests a foliation of the equidistant hypersurface $\E{\vz,\vw}$ by
surfaces which are products of curves in the hyperbolic plane.  

\begin{defn}
Let $k\in\R$, and $z,w\in\HP$ be distinct points.  The {\em hyperbolic square hyperbola}
$\SH{k}{z,w}$ is the curve in the hyperbolic plane defined as follows: 
\begin{equation*}
\SH{k}{z,w}=\{x\in\HP~\mid~\HDS(x,z)-\HDS(x,w)=k\}
\end{equation*}
\end{defn}
Since the context is clear, we will simply call it a {\em square hyperbola}.

Figure~\ref{fig:ThreeSH} shows $\SH{k}{z,w}$ for $z,w$ on the imaginary axis and
$k$ respectively positive, 0 and negative.
\begin{figure}
\includegraphics{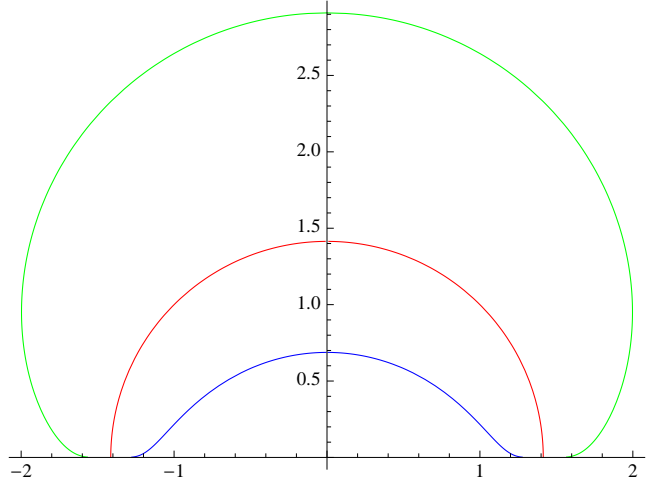}
\caption{Some square hyperbolae $\SH{k}{z,w}$, for $z=I$ and $w=2 I$.  The
middle curve is $\SH{0}{z,w}$, the top curve corresponds to a positive value of
$k$ and the bottom curve, a negative value of $k$.}
\label{fig:ThreeSH}
\end{figure}

Observe that $\SH{k}{z,w}=\SH{-k}{w,z}$.  
Furthermore, $\SH{0}{z,w}$ is simply the equidistant line between $z$ and
$w$.

\begin{lemma}
Let $z,w\in\HP$ be distinct points.  Then for any $k\in\R$, the endpoints of $\SH{k}{z,w}$ are
those of the equidistant line $\SH{0}{z,w}$.
\end{lemma}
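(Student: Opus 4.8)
The plan is to normalize the configuration by an isometry and then exploit the factorization $\HD(x,z)^2-\HD(x,w)^2=\big(\HD(x,z)-\HD(x,w)\big)\big(\HD(x,z)+\HD(x,w)\big)$. Since an isometry $g$ of $\HP$ satisfies $g\big(\SH{k}{z,w}\big)=\SH{k}{g(z),g(w)}$ and extends to a homeomorphism of the closed disk $\overline{\HP}$, the assertion is unchanged if I replace $(z,w)$ by $(g(z),g(w))$. Taking $g$ that carries the geodesic through $z$ and $w$ to the imaginary axis and the midpoint of $z,w$ to $i$, I reduce to $z=\lambda i$, $w=\lambda^{-1}i$ with $\lambda>1$. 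From the distance formula in the statement (equivalently, from $\cosh\HD(x,q)=1+|x-q|^2/(2\operatorname{Im}(x)\operatorname{Im}(q))$) I get, for $x=u+iv$,
\[
\cosh\HD(x,z)=\frac{\lambda^{-1}|x|^2+\lambda}{2v},\qquad
\cosh\HD(x,w)=\frac{\lambda|x|^2+\lambda^{-1}}{2v},
\]
and comparing them shows $\HD(x,z)=\HD(x,w)$ precisely when $|x|=1$; hence $\SH{0}{z,w}$ is the unit semicircle, with endpoints $\pm1$, and it remains to show that the ideal endpoints of $\SH{k}{z,w}$ are $\pm1$ for every $k$.

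The main step: write $a=\HD(x,z)$, $b=\HD(x,w)$. The formulas above show $a,b\to+\infty$ as $x\to\partial\HP$, so on $\SH{k}{z,w}$ the identity $k=a^2-b^2=(a-b)(a+b)$ forces $a-b\to0$ along the curve as $x$ approaches the ideal boundary. On the other hand, $\operatorname{arccosh}y=\log(2y)+o(1)$ as $y\to\infty$ gives $a-b=\log(\cosh a/\cosh b)+o(1)$, hence
\[
a-b=\log\frac{|x|^2+\lambda^2}{\lambda^2|x|^2+1}+o(1)\qquad\text{as }x\to\partial\HP .
\]
Thus if $x_n\in\SH{k}{z,w}$ converges to a finite $p\in\partial\HP$, I obtain $\log\frac{p^2+\lambda^2}{\lambda^2p^2+1}=0$, which (as $\lambda\neq1$) gives $p^2=1$; and $p=\infty$ is excluded because there the quotient tends to $\lambda^{-2}\neq1$. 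So every ideal endpoint of $\SH{k}{z,w}$ is $1$ or $-1$.

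To see that both occur, I would first note that $f(x)=\HD(x,z)^2-\HD(x,w)^2$ is a submersion: $\nabla f=2a\nabla a-2b\nabla b$ can vanish only when $a=b$ and the two unit gradients coincide, which would place $x$ at the midpoint of $z,w$ on the geodesic through them, where $a=b$ fails. So each $\SH{k}{z,w}$ is a $1$-manifold; it is nonempty (on the imaginary axis $f(ti)=-4\log\lambda\cdot\log t$ is already onto $\R$) and has no compact component (one would bound a disk on which $f$ would attain an interior extremum), hence is unbounded and accumulates on $\partial\HP$. Finally, the reflection $x\mapsto-\overline{x}$ fixes both $z$ and $w$, so it preserves $\SH{k}{z,w}$ and interchanges $1$ and $-1$; therefore the nonempty, reflection-invariant set of ideal endpoints of $\SH{k}{z,w}$ equals $\{1,-1\}$, which is exactly the endpoint set of $\SH{0}{z,w}$.

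The main obstacle is the second step: recognizing that $a^2-b^2$ is the product of the difference and the sum of the two distances, and that since the sum blows up at the ideal boundary the difference must vanish there — this is what collapses the problem onto the already-understood equidistant line $\SH{0}{z,w}$. The remaining pieces — the explicit $\cosh$-formulas, the $\operatorname{arccosh}$ asymptotic (whose $o(1)$ I would need to justify uniformly as $x\to\partial\HP$), the exclusion of $p=\infty$, and the submersion-plus-symmetry argument for surjectivity onto $\{\pm1\}$ — are routine.
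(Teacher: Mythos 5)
Your argument is correct, but it is a genuinely different route from the paper's. The paper normalizes to $z=aI$, $w=bI$ and builds an explicit parametrization of each branch of $\SH{k}{z,w}$ as the locus $C_1(t)\cap C_2(t)$ of intersections of hyperbolic circles of radii $\sqrt{k}\cosh t$ and $\sqrt{k}\sinh t$, then computes the Euclidean limit of that parametrization as $t\to\infty$ and reads off the endpoints $\pm\sqrt{ab}$ directly. You instead make the soft observation that on $\SH{k}{z,w}$ the identity $k=(a-b)(a+b)$ with $a+b\to\infty$ forces $a-b\to 0$ at the ideal boundary, so the ideal accumulation set of $\SH{k}{z,w}$ is trapped inside the zero set "at infinity" of $a-b$, which your $\cosh$-formulas identify as $\{\pm 1\}$; you then supply nonemptiness of both ends via the submersion property of $\HDS(\cdot,z)-\HDS(\cdot,w)$ and the reflection symmetry $x\mapsto-\overline{x}$. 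Your approach buys conceptual robustness: it avoids the Euclidean circle-intersection formulas entirely, it would generalize to other level curves of differences of (squares of) distance functions, and it is more careful than the paper about why \emph{both} boundary points $\pm 1$ actually occur and why no other accumulation points arise. What it gives up is the explicit parametrization~\eqref{eq:SHparametrization}, which the paper reuses later (in the alternative proof of Theorem~\ref{thm:main}, to show $x(t)/y(t)$ is monotone along a branch), so in the context of the paper the constructive proof is doing double duty. The only points you flag as needing care --- the uniformity of the $o(1)$ in $\operatorname{arccosh} y=\log(2y)+o(1)$ and the exclusion of $p=\infty$ --- are indeed routine: along any sequence $x_n\to p\in\partial\HP$ both $\cosh a_n$ and $\cosh b_n$ tend to $+\infty$, so the two error terms vanish separately.
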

\begin{proof}
We can assume, up to the action of an isometry on $\HP$, that $z=aI$, $w=bI$, where $a,b>0$.  
We prove the lemma for $k>0$, the case $k<0$ being analogous.
For $t\geq 0$, set: 
\begin{align*}
C_1(t) & =\{x\in\HP~\mid~\HD(x,aI)=\sqrt{k}\cosh{t}\} \\
C_2(t) & =\{x\in\HP~\mid~\HD(x,bI)=\sqrt{k}\sinh{t}\}
\end{align*}

Then $C_1(t)$ and $C_2(t)$ are hyperbolic circles,
whose Euclidean centers are respectively:
\begin{align*}
c_1(t) & =a\cosh(\sqrt{k}\cosh{t})I \\
c_2(t) & =b\cosh(\sqrt{k}\sinh{t})I
\end{align*}
and whose Euclidean radii are respectively 
\begin{align*}
r_1(t) & =a\sinh(\sqrt{k}\cosh{t}) \\
r_2(t) & =b\sinh(\sqrt{k}\sinh{t})
\end{align*}
There exists $t_0$ such that $C_1(t)\cap C_2(t)\neq\emptyset$ for all $t\geq
t_0$.  
The curve $\SH{k}{z,w}$ consists of all points contained in $C_1(t)\cap C_2(t)$,
$t\geq t_0$. 
Using Euclidean formulas for intersections of circles, we obtain a
parametrization for each branch of $\SH{k}{z,w}$,
according to whether the real part is positive or negative:
\begin{equation}\label{eq:SHparametrization}
t\longmapsto\pm\sqrt{r_1(t)^2-(y(t)-c_1(t))^2}+y(t)I.
\end{equation}
where $y(t)=\frac{b^2-a^2}{2(c_2(t)-c_1(t))}$. 
The limit when $y(t)\rightarrow 0$, that is,
when $t\rightarrow\infty$, is $\pm\sqrt{ab}$, which are precisely the endpoints of $\SH{0}{z,w}$.

\end{proof}

We now introduce notation for surfaces that foliate the equidistant
hypersurfaces.   For $\vz=(z_1,z_2)$, $\vw=(w_1,w_2)\in\BD$, $z_i\neq w_i$, and $k\in\R$, set:
\begin{equation*}
\EK{k}{\vz,\vw}=\SH{k}{z_1,w_1}\times\SH{k}{w_2,z_2}
\end{equation*}
As with square hyperbolae, $\EK{k}{\vz,\vw}=\EK{-k}{\vw,\vz}$.   Thus:

\begin{equation}
\bi\left(\EK{k}{\vz,\vw}\right)  =\EK{k}{\bi(\vw),\bi(\vz)}=
\EK{-k}{\bi(\vz),\bi(\vw)}\label{eq:iotaofE}
\end{equation}

\begin{lemma}
Let $\vz=(z_1,z_2)$ and $\vw=(w_1,w_2)\in\BD$, $z_i\neq w_i$.  Then:
\begin{align*}
\E{\vz,\vw} & = \bigcup_{k\in\R} \EK{k}{\vz,\vw}
\end{align*}
\end{lemma}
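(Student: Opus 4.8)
The plan is to show the two set-theoretic inclusions separately, using the reformulated equidistance equation
\[
\HDS(x_1,z_1)-\HDS(x_1,w_1) = \HDS(x_2,w_2)-\HDS(x_2,z_2)
\]
that was derived just before the statement. First I would unwind the definitions of the two sides. A point $\vx=(x_1,x_2)$ lies in $\EK{k}{\vz,\vw}=\SH{k}{z_1,w_1}\times\SH{k}{w_2,z_2}$ precisely when $\HDS(x_1,z_1)-\HDS(x_1,w_1)=k$ \emph{and} $\HDS(x_2,w_2)-\HDS(x_2,z_2)=k$, using the definition of square hyperbola and the fact that the second factor is $\SH{k}{w_2,z_2}$ (note the swapped order). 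Adding these, or rather observing that both quantities equal the \emph{same} $k$, immediately gives the reformulated equidistance equation, hence $\vx\in\E{\vz,\vw}$. This establishes $\bigcup_{k\in\R}\EK{k}{\vz,\vw}\subseteq\E{\vz,\vw}$.

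For the reverse inclusion, take $\vx=(x_1,x_2)\in\E{\vz,\vw}$, so it satisfies the reformulated equation. I would then simply \emph{define}
\[
k := \HDS(x_1,z_1)-\HDS(x_1,w_1),
\]
which is a real number. By the equidistance equation this same $k$ equals $\HDS(x_2,w_2)-\HDS(x_2,z_2)$. Therefore $x_1\in\SH{k}{z_1,w_1}$ and $x_2\in\SH{k}{w_2,z_2}$ by definition, so $\vx\in\EK{k}{\vz,\vw}\subseteq\bigcup_{k\in\R}\EK{k}{\vz,\vw}$. The two inclusions together give the equality.

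This is essentially a bookkeeping argument: the only place where one must be careful is matching the ordering of the arguments in the square hyperbolae, since $\SH{k}{z,w}$ and $\SH{k}{w,z}=\SH{-k}{z,w}$ differ by a sign — this is exactly why the definition of $\EK{k}{\vz,\vw}$ pairs $\SH{k}{z_1,w_1}$ with $\SH{k}{w_2,z_2}$ rather than $\SH{k}{z_2,w_2}$, so that the contributions of the two factors add (rather than cancel) to reproduce the equidistance equation. There is no real obstacle; the ``hard part,'' such as it is, is just presenting the index-matching cleanly so the reader sees why the sign conventions in the definition of $\EK{k}{\vz,\vw}$ were chosen. One should also briefly note that the hypothesis $z_i\neq w_i$ is what makes each $\SH{k}{\cdot,\cdot}$ a well-defined square hyperbola, so the union is over the full range of $k\in\R$.
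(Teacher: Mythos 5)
Your proof is correct and is exactly the argument the paper intends: the lemma is stated without proof as an immediate consequence of the rewritten equidistance equation, and your two inclusions (with $k$ taken to be the common value of the two sides) spell out precisely that bookkeeping, including the sign convention that makes the second factor $\SH{k}{w_2,z_2}$ rather than $\SH{k}{z_2,w_2}$.
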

The equidistant hypersurface $\E{\vz,\vw}$ contains a single flat, namely, 
$\EK{0}{\vz,\vw}$.  

\begin{defn}
The flat
$\EK{0}{\vz,\vw}$ is called the {\em spine} of $\E{\vz,\vw}$.
\end{defn}

Two distinct equidistant hypersurfaces $\E{\vz,\vw}$ and $\E{\vz',\vw'}$ might
share a common spine, even if $\{\vz,\vw\}\neq\{\vz',\vw'\}$. For instance,
the equidistant surfaces $\E{ ( i/2, i/2 ),( 2i, 2i   ) }$ and 
$\E{ ( i/4, i/4 ),( 4i, 4i   ) }$ share the same spine.
\subsection{Invisibility}
As mentioned in the Introduction, intersections of equidistant hypersurfaces are
difficult to analyze.  
The discussion in \S~\ref{sec:Dirichlet} will require further knowledge of the
location, in the bidisk, 
of even disjoint equidistant hypersurfaces.  To this end, we introduce the
concept of {\em invisibility}.

\begin{defn}
Let $\vx\in\BD$ and $\gamma\in\isob$.  A point $\vy\in\BD$ is {\em $\gamma$-visible to} $\vx$ if:
\begin{enumerate}
\item $\BHD(\vy,\vx)\leq\BHD(\vy,\gamma(\vx))$ and
\item $\BHD(\vy,\vx)\leq\BHD(\vy,\gamma^{-1}(\vx))$.
\end{enumerate}
Otherwise, we say that $\vy$ is {\em $\gamma$-invisible to} $\vx$.
\end{defn}
In other words, $\vy$ being $\gamma$-invisible to $\vx$ means that $\vx$ and $\vy$ are on
opposite sides of 
$\E{\vx,\gamma(\vx)}$ or of $\E{\vx,\gamma^{-1}(\vx)}$.

\begin{defn}
Let $\vx\in\BD$ and $\gamma\in\isob$.  A set $A\subset\BD$ is {\em$\gamma$-invisible to} $\vx$ if every point in $A$ is 
$\gamma$-invisible to $\vx$.
\end{defn}

\begin{lemma}\label{lem:invisibleflats}
Let $\vx,\vy\in\BD$ and $\gamma\in\isob$.  Suppose:
\begin{align*}
\E{\vx,\vy}\cap\E{\vx,\gamma(\vx)} & =\emptyset \\
\E{\vx,\vy}\cap\E{\vx,\gamma^{-1}(\vx)} & =\emptyset
\end{align*}
Then $\E{\vx,\vy}$ is $\gamma$-invisible to $\vx$ if and only if the spine
$\EK{0}{\vx,\vy}$ is $\gamma$-invisible to $\vx$.
\end{lemma}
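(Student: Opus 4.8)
The plan is to reduce the visibility question for the whole hypersurface $\E{\vx,\vy}$ to the visibility question for a single leaf of its foliation, namely the spine $\EK{0}{\vx,\vy}$, using the disjointness hypotheses to guarantee that ``being on one side'' is a connected (hence constant) condition. Write $\H_1 = \E{\vx,\gamma(\vx)}$ and $\H_2 = \E{\vx,\gamma^{-1}(\vx)}$. By hypothesis $\E{\vx,\vy}$ is disjoint from both $\H_1$ and $\H_2$. Since each $\H_i$ separates $\BD$ into two components (the bidisk is a Hadamard manifold, so each equidistant hypersurface is an embedded separating hypersurface; the two sides are $\{\BHD(\cdot,\vx)<\BHD(\cdot,\gamma^{\pm 1}\vx)\}$ and its complement), and $\E{\vx,\vy}$ is connected and misses $\H_i$, the hypersurface $\E{\vx,\vy}$ lies entirely in one component of $\BD\setminus\H_i$. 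Thus for each $i$, either \emph{every} point of $\E{\vx,\vy}$ is on the $\vx$-side of $\H_i$, or \emph{every} point is on the far side. Consequently $\E{\vx,\vy}$ is $\gamma$-invisible to $\vx$ (i.e. some point of it is on the far side of $\H_1$ or of $\H_2$) if and only if \emph{all} of $\E{\vx,\vy}$ is on the far side of $\H_1$, or all of it is on the far side of $\H_2$.

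The forward direction of the lemma is then immediate: if $\E{\vx,\vy}$ is $\gamma$-invisible to $\vx$, then (by the dichotomy above) all of $\E{\vx,\vy}$ sits on the far side of $\H_1$ or all of it sits on the far side of $\H_2$; in particular the spine $\EK{0}{\vx,\vy}\subset\E{\vx,\vy}$ does too, so the spine is $\gamma$-invisible to $\vx$. The reverse direction is equally short: the spine is a subset of $\E{\vx,\vy}$, so if the spine is $\gamma$-invisible to $\vx$ then $\E{\vx,\vy}$ contains a $\gamma$-invisible point and is therefore $\gamma$-invisible to $\vx$ by definition. (Note that the reverse implication does not even use the disjointness hypotheses; they are needed only for the forward direction, to promote ``some point is invisible'' to ``the spine is invisible.'')

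First I would record the separation statement carefully: that for $\vp\neq\vq$, $\E{\vp,\vq}$ is a closed embedded hypersurface whose complement has exactly two components, characterized by the sign of $\BHD(\cdot,\vp)-\BHD(\cdot,\vq)$. This follows because $\BHD(\cdot,\vp)$ and $\BHD(\cdot,\vq)$ are smooth away from $\vp,\vq$ with nonvanishing difference of gradients off $\E{\vp,\vq}$ (a consequence of the Hadamard property — geodesics and distance functions are well-behaved), so $\E{\vp,\vq}$ is a regular level set. Then I would state the connectedness of $\E{\vx,\vy}$, which is clear from the foliation $\E{\vx,\vy}=\bigcup_{k}\EK{k}{\vx,\vy}$ by the connected leaves $\SH{k}{z_1,w_1}\times\SH{k}{w_2,z_2}$ (each square hyperbola is connected, being a graph over an interval of the imaginary axis by the parametrization in the previous lemma, or having two branches joined through the spine — in any case the union over all $k$ is connected since all leaves meet the spine's closure). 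With these two facts in hand the argument is just the connectedness dichotomy above.

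The only point that requires a little care — the main obstacle — is making the separation/connectedness bookkeeping airtight: one must be sure that ``$\vy$ is $\gamma$-invisible to $\vx$'' unwinds to ``$\vy$ lies strictly on the far side of $\H_1$ or strictly on the far side of $\H_2$,'' and that $\E{\vx,\vy}$ being disjoint from $\H_i$ really does force it into a single open component (this uses that $\E{\vx,\vy}$ is connected and that $\H_i$ is a genuine separating hypersurface, not merely a closed set). Once the topology is pinned down, no computation with square hyperbolae is needed; the lemma is a soft consequence of separation plus connectedness of the foliated hypersurface.
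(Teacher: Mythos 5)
Your first paragraph contains the right idea, and it is essentially the paper's argument: the disjointness hypotheses together with the connectedness of $\E{\vx,\vy}$ force the sign of $\BHD(\cdot,\vx)-\BHD(\cdot,\gamma^{\pm1}(\vx))$ to be constant on $\E{\vx,\vy}$. (The paper phrases this as an intermediate-value argument: if the sign changed on the connected set $\E{\vx,\vy}$, some point of $\E{\vx,\vy}$ would lie on $\E{\vx,\gamma(\vx)}$ or $\E{\vx,\gamma^{-1}(\vx)}$, contradicting disjointness. One does not need the finer claim that these are embedded separating hypersurfaces; it suffices that each side is the set where a continuous function is positive, respectively negative.) Granting connectedness of $\E{\vx,\vy}$ --- which the paper also uses without proof --- that dichotomy does yield the lemma.

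However, your logical bookkeeping around the definition is inverted, and it leads you to a false claim. By the paper's definition, a \emph{set} is $\gamma$-invisible to $\vx$ when \emph{every} point of it is $\gamma$-invisible, not when some point is. Consequently the trivial direction is the forward one ($\E{\vx,\vy}$ invisible $\Rightarrow$ its subset $\EK{0}{\vx,\vy}$ invisible, with no hypotheses needed), and the direction that genuinely needs the disjointness hypotheses is the reverse one: knowing only that every point of the spine is invisible, you must promote this to every point of the much larger set $\E{\vx,\vy}$. Your parenthetical asserts the exact opposite --- that the reverse implication ``does not even use the disjointness hypotheses'' because $\E{\vx,\vy}$ ``contains a $\gamma$-invisible point and is therefore $\gamma$-invisible by definition.'' That is not the definition, and the claim is false: without the disjointness hypotheses, $\E{\vx,\vy}$ could cross $\E{\vx,\gamma(\vx)}$ and $\E{\vx,\gamma^{-1}(\vx)}$, so that the spine is invisible while other points of $\E{\vx,\vy}$ are visible, and the lemma's conclusion would fail. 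The fix is simply to route the reverse direction through the dichotomy you already established: if some point of the spine lies strictly on the far side of, say, $\E{\vx,\gamma(\vx)}$, then by constancy of sides all of $\E{\vx,\vy}$ does, hence every point of $\E{\vx,\vy}$ is $\gamma$-invisible. With that correction your proof coincides with the paper's.
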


\begin{proof}
The implication is obvious by the definition of a $\gamma$-invisible set.  
Conversely, suppose $\EK{0}{\vx,\vy}$ is $\gamma$-invisible to $\vx$.  Let $\vw_0\in\EK{0}{\vx,\vy}$; then either 
$\BHD(\vw_0,\vx)>\BHD(\vw_0,\gamma(\vx))$ or $\BHD(\vw_0,\vx)>\BHD(\vw_0,\gamma^{-1}(\vx))$.  Assume that 
$\BHD(\vw_0,\vx)>\BHD(\vw_0,\gamma(\vx))$, since the other case is analogous. Set:
\begin{align*}
 f: \E{\vx,\vy} &\longrightarrow \R \\
\vw & \longmapsto \BHD(\vw,\vx)-\BHD(\vw,\gamma(\vx))
\end{align*}
We claim that $f(\E{\vx,\vy})\subset\R_{>0}$, which implies that $\E{\vx,\vy}$ is $\gamma$-invisible to $\vx$.  
Otherwise, since $\E{\vx,\vy}$ is connected, there exists $\vw\in\E{\vx,\vy}$ such that $f(\vw)=0$.  But then $\vw\in\E{\vx,\gamma(\vx)}$, 
contradicting the assumption that $\E{\vx,\vy}$ and $\E{\vx,\gamma(\vx)}$ are disjoint.
\end{proof}

\begin{rem}\label{rem:invisibleplane}
An analogous notion of invisibility holds in any Riemannian manifold, in particular
the hyperbolic plane.
\end{rem}

\section{Dirichlet domains}\label{sec:Dirichlet}
Let $G$ be a finitely generated discrete group acting on a Riemannian space $X$
with distance function $d$.  Recall that a {\em Dirichlet domain} for $G$ with
basepoint $p\in X$ is the set:
\begin{equation*}
\D{G}{p}=\{q\in X~\mid d(p,q)\leq d(p,g(q))\mbox{ for all }g\neq id\in G\}
\end{equation*}
Specifically, $\D{G}{p}$ is the intersection of all half-spaces $\H_g$, $g\in G$,
where $\H_g$ is the half-space containing $p$ bounded by the locus of equidistant
points between $p$ and $g(p)$.  A Dirichlet domain is a fundamental domain for
the action of $G$ on $X$.  

Let $\gamma\in\Gamma$ and $\vz\in\BD$. Suppose that the equidistant
hypersurfaces $\E{\vz,\gamma(\vz)}$ and $\E{\vz,\gamma^{-1}(\vz)}$ are disjoint.
 Then their spines are disjoint as well. In the hyperbolic plane, if the
equidistant lines between $z$ and $g^j(z)$ are disjoint for $j=\pm 1$, where $g\in\isoh$, then
every equidistant line between $z$ and $g^j(z)$, $j\neq 0,\pm 1$, will be
$g$-invisible to $z$.  (See Remark~\ref{rem:invisibleplane}.)  By
Lemma~\ref{lem:invisibleflats}, $\E{\vz,\gamma(\vz)}$ and
$\E{\vz,\gamma^{-1}(\vz)}$ thus bound a Dirichlet domain for the action of
$\langle\gamma\rangle$.

\begin{thm}\label{thm:main}
Let $\gamma=(g_1,g_2)\in\Gamma$ where both $g_1$ and $g_2$ are hyperbolic.  Let 
$\vz=(z_1,z_2)$ such that $z_i$ lies on the invariant axis for $g_i$, $i=1,2$. 
Then $\E{\vz,\gamma(\vz)}$ and $\E{\vz,\gamma^{-1}(\vz)}$ are disjoint.

In particular, $\D{\Gamma}{\vz}$ is two-faced.
\end{thm}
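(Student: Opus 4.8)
The plan is to turn the disjointness statement into a single scalar inequality and then read that inequality off the strict convexity of squared distance in a Hadamard manifold. Write $\gamma=(g_1,g_2)$, $\vz=(z_1,z_2)$, and recall from the rewriting of the equidistant equation in \S\ref{sec:ehs} that $\vx=(x_1,x_2)$ lies in $\E{\vz,\gamma(\vz)}$ exactly when
\[
A_1(x_1)+A_2(x_2)=0,\qquad A_i(x_i):=\HDS(x_i,z_i)-\HDS\bigl(x_i,g_i(z_i)\bigr),
\]
and in $\E{\vz,\gamma^{-1}(\vz)}$ exactly when $B_1(x_1)+B_2(x_2)=0$, with $B_i(x_i):=\HDS(x_i,z_i)-\HDS\bigl(x_i,g_i^{-1}(z_i)\bigr)$. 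Adding the two relations, a common point would satisfy $\bigl(A_1(x_1)+B_1(x_1)\bigr)+\bigl(A_2(x_2)+B_2(x_2)\bigr)=0$, so it is enough to prove that for each $i$ and every $x_i\in\HP$ one has $A_i(x_i)+B_i(x_i)<0$, i.e.
\[
2\HDS(x_i,z_i)<\HDS\bigl(x_i,g_i(z_i)\bigr)+\HDS\bigl(x_i,g_i^{-1}(z_i)\bigr).
\]

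This inequality is exactly where the hypothesis $z_i\in l_i$ is used, and it is the crux of the proof. Because $g_i$ is hyperbolic it translates its axis $l_i$ by a fixed length $T_i>0$, so the three points $g_i^{-1}(z_i)$, $z_i$, $g_i(z_i)$ lie on the geodesic $l_i$ with $z_i$ the exact midpoint of the outer two. Since $\HP$ is a Hadamard manifold, hence a $\mathrm{CAT}(0)$ space, the comparison inequality applied to the geodesic from $g_i^{-1}(z_i)$ to $g_i(z_i)$, with midpoint $z_i$ and observer $x_i$, gives
\[
\HDS(x_i,z_i)\le\tfrac12\HDS\bigl(x_i,g_i^{-1}(z_i)\bigr)+\tfrac12\HDS\bigl(x_i,g_i(z_i)\bigr)-\tfrac14\,\HD\bigl(g_i^{-1}(z_i),g_i(z_i)\bigr)^{2},
\]
and the last term is strictly negative since $g_i^{-1}(z_i)\ne g_i(z_i)$. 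This is the desired inequality, so $\E{\vz,\gamma(\vz)}\cap\E{\vz,\gamma^{-1}(\vz)}=\emptyset$ (one can also verify it by a direct computation with $l_i$ normalised to the imaginary axis, of which the $\mathrm{CAT}(0)$ step is the conceptual packaging).

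For the ``in particular'' clause I would follow the discussion preceding the theorem. Disjointness of the two equidistant hypersurfaces forces disjointness of their spines $\EK{0}{\vz,\gamma(\vz)}$ and $\EK{0}{\vz,\gamma^{-1}(\vz)}$; in fact, factorwise, the equidistant lines of $z_i$ with $g_i(z_i)$ and with $g_i^{-1}(z_i)$ are disjoint, being distinct geodesics perpendicular to $l_i$. By Remark~\ref{rem:invisibleplane} and the hyperbolic-plane fact that disjointness of the $j=\pm1$ equidistant lines makes every equidistant line of $z_i$ with $g_i^{\,j}(z_i)$, $|j|\ge2$, invisible to $z_i$ — in the sharp form that each such line lies entirely on the $g_i^{\,\epsilon}(z_i)$-side of the bisector of $[z_i,g_i^{\,\epsilon}(z_i)]$, where $\epsilon=\pm1$ is the sign of $j$ (its foot on $l_i$ is on that side, and the two perpendiculars to $l_i$ are disjoint) — one gets, by taking products, that each spine $\EK{0}{\vz,\gamma^{\,j}(\vz)}$ with $|j|\ge2$ is $\gamma$-invisible to $\vz$. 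The same convexity argument, now with $z_i$ lying strictly between $g_i^{\,j}(z_i)$ and $g_i^{\,\pm1}(z_i)$ on $l_i$, shows $\E{\vz,\gamma^{\,j}(\vz)}$ is disjoint from both $\E{\vz,\gamma^{\pm1}(\vz)}$, so Lemma~\ref{lem:invisibleflats} promotes this to $\gamma$-invisibility of all of $\E{\vz,\gamma^{\,j}(\vz)}$. Hence the half-spaces bounded by $\E{\vz,\gamma^{\,j}(\vz)}$, $|j|\ge2$, are redundant in $\D{\Gamma}{\vz}$, which is therefore two-faced.

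I expect the one step needing genuine care to be the convexity inequality of the second paragraph — everything else is formal manipulation or an appeal to Lemma~\ref{lem:invisibleflats} — and it is worth stressing that it depends essentially on $z_i$ being the \emph{geodesic} midpoint of $g_i^{-1}(z_i)$ and $g_i(z_i)$. For a basepoint off the invariant flat those three points lie on a hypercycle rather than a geodesic, no midpoint convexity is available, and indeed the conclusion fails, consistent with the observation in the introduction that the Dirichlet domain need not be two-faced in that case.
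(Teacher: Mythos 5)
Your proof is correct and rests on the same idea as the paper's: the paper argues that $\gamma^{-1}(\vz),\vz,\gamma(\vz)$ are three collinear points that would all lie on a sphere centered at a common point of the two hypersurfaces, contradicting strict convexity of balls in a Hadamard manifold, while you package exactly that strict convexity as the CAT(0) midpoint inequality applied factor by factor and summed. The only nitpick is in your sketch of the ``two-faced'' clause: for $|j|\ge 2$ the point $z_i$ is not the midpoint (nor always between) $g_i^{\,j}(z_i)$ and $g_i^{\,\epsilon}(z_i)$, so there one should invoke the general fact that the strictly convex function $t\mapsto\BHDS(\vx,\sigma(t))$ along the invariant geodesic cannot take the same value at three points, rather than the midpoint inequality.
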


\begin{proof}
 Suppose $\vx\in\E{\vz,\gamma(\vz)}\cap\E{\vz,\gamma^{-1}(\vz)}$ .  Then the
straight line containing $\vz$, $\gamma(\vz)$ and $\gamma^{-1}(\vz)$
intersects a sphere centered at $\vx$ in three points.  However, $\BD$ being a
Hadamard manifold, 
every sphere bounds a ball that is strictly convex.  
But in that case, straight lines intersect spheres in at most 
two points, contradicting the hypothesis.  
\end{proof}
(The reader unfamiliar with Hadamard manifolds or, more generally, spaces of
non-positive curvature, 
might consult Busemann~\cite{Bu} or Bridson-Haefliger~\cite{BH} for details.)

\subsection{Dirichlet domains with an unbounded number of faces}

In computer experiments, we have obtained examples where $\E{\vz,\gamma(\vz)}$
and $\E{\vz,\gamma^{-1}(\vz)}$ do intersect, when $\vz$ does not belong to the
$\gamma$-invariant flat.  Specifically, taking $g:z\mapsto 2z$ and
$z=1+\frac{1}{4} I$, the square hyperbolae $\SH{10}{g^{-1}(z),z}$ and
$\SH{10}{z,g(z)}$ intersect.  Plausibly, the closer $z$ gets to the
$g$-invariant axis, the larger $k$ will need to be.  It would be interesting to
know whether the Dirichlet domain remains two-faced for points very close to the
invariant flat.  

\begin{conj}
Every $\gamma=(g_1,g_2)$ where both $g_i$'s are hyperbolic admits a Dirichlet
domain with more than two faces.
\end{conj}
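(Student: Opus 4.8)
The plan is to exhibit, for an arbitrary $\gamma=(g_1,g_2)$ with $g_1,g_2$ hyperbolic, a single basepoint off the invariant flat for which a face beyond the two coming from $\gamma^{\pm1}$ appears. First I would normalize: conjugating each factor independently by an element of $\isoh$ (allowed since $\BG=\isoh\times\isoh$), assume $g_i(z)=\lambda_i z$ with $\lambda_i>1$, so the invariant flat is the product of the two imaginary axes and $\gamma$ acts on it as the Euclidean translation by $(\log\lambda_1,\log\lambda_2)$. Because $\langle\gamma\rangle\cong\Z$ and the side-pairings of any Dirichlet domain generate the group, the faces carried by $\E{\vz,\gamma(\vz)}$ and $\E{\vz,\gamma^{-1}(\vz)}$ are always present; hence it suffices to produce, for some $\vz$, one further face, say the one carried by $\E{\vz,\gamma^{2}(\vz)}$. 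Concretely, I must find $\vz$ and a witness point $\vy$ with $\BHD(\vy,\vz)=\BHD(\vy,\gamma^{2}(\vz))$ and $\BHD(\vy,\vz)<\BHD(\vy,\gamma^{j}(\vz))$ for every $j\neq0,2$.

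The mechanism to exploit is the dichotomy already visible in Theorem~\ref{thm:main}. Writing $\vy=(y_1,y_2)$ and
\[
F_j(y_1)=\HDS(y_1,z_1)-\HDS(y_1,g_1^{j}(z_1)),\qquad G_j(y_2)=\HDS(y_2,z_2)-\HDS(y_2,g_2^{j}(z_2)),
\]
the condition that $\vy$ be at least as close to $\vz$ as to $\gamma^{j}(\vz)$ becomes $F_j(y_1)+G_j(y_2)\le0$. When $z_i$ lies on the axis, each $F_j,G_j$ is an affine function of the axial coordinate, the orbit is collinear, and convexity forces exactly two faces, as in the proof of Theorem~\ref{thm:main}. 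Off the axis the functions $F_j,G_j$ are genuinely non-affine, and this is the slack I would use.

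I would look for the witness $\vy$ on the spine $\EK{0}{\vz,\gamma^{2}(\vz)}=\SH{0}{z_1,\lambda_1^{2}z_1}\times\SH{0}{\lambda_2^{2}z_2,z_2}$, where $F_2\equiv G_2\equiv0$ automatically, so every such $\vy$ already lies on $\E{\vz,\gamma^{2}(\vz)}$. On this flat the required inequalities decouple into one-variable estimates in each hyperbolic plane: at a suitable $\vy$ on the spine I must show that the intermediate point $\gamma(\vz)$ (and $\gamma^{-1}(\vz)$) is strictly farther than $\vz$, that is $F_{1}(y_1)+G_{1}(y_2)<0$ and $F_{-1}(y_1)+G_{-1}(y_2)<0$. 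This is exactly the assertion that the square hyperbola through $y_1$ for the pair $(z_1,g_1(z_1))$ bends past $g_1(z_1)$, the phenomenon the authors observed numerically for $g:z\mapsto2z$; driving $\vz$ off the axis toward the ideal boundary should sharpen the bend in the orbit and make these strict inequalities hold far out on the spine. The remaining powers are controlled for free: proper discontinuity gives $\BHD(\vy,\gamma^{j}(\vz))\to\infty$ as $|j|\to\infty$, so only finitely many $j$ can violate $F_j+G_j<0$, and each is a one-variable check. Finally, Lemma~\ref{lem:invisibleflats} upgrades the conclusion from the spine to the whole hypersurface $\E{\vz,\gamma^{2}(\vz)}$, confirming a genuine third face.

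The hard part is uniformity over all $(\lambda_1,\lambda_2)$. The two factors couple only through the shared distance budget in $F_j+G_j$, and moving off the axis destroys the orbit-reversing symmetry that trivialized the on-axis case—there is no half-turn fixing $\gamma(\vz)$ once $z_i$ is off the axis—so one cannot simply centre the witness point. I expect the delicate regimes to be $\lambda_i\to1$, where the bend is faint and the witness must be sought very far out on the spine, and $\lambda_i\to\infty$, where the two factors are wildly mismatched. The most promising route around this is a limiting argument: send $\vz$ to a boundary point off the axis, identify a clean limiting witness configuration, and transfer the strict inequalities back to nearby genuine basepoints by continuity. Making that limit rigorous, and in particular ruling out cancellation between the two factors in $F_{\pm1}+G_{\pm1}$, is where the real work lies.
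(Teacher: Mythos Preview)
The statement you are addressing is labelled a \emph{Conjecture} in the paper and is explicitly left open: the authors say that ``this is an ongoing program'' and offer only a numerical example together with Lemmas~\ref{lem:samesign} and~\ref{lem:samevalue} as partial tools.  There is no proof in the paper to compare your proposal against, and your own text is, as you acknowledge at the end, a strategy sketch rather than a proof.

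Beyond its admitted incompleteness, your proposed mechanism contains a concrete error: the witness can never lie on the spine $\EK{0}{\vz,\gamma^{2}(\vz)}$.  With your normalization $g_i(z)=\lambda_i z$ and $z_i=a_i+b_i I$, the identity $\cosh\HD(w,z)=1+\tfrac{|w-z|^{2}}{2\,\mathrm{Im}(w)\,\mathrm{Im}(z)}$ shows that for every $n\ge 1$ the equidistant line $\SH{0}{z_i,g_i^{n}(z_i)}$ is the Euclidean semicircle about the origin of radius $\lambda_i^{n/2}|z_i|$.  In particular $\SH{0}{z_i,g_i^{2}(z_i)}$ (radius $\lambda_i|z_i|$) lies entirely outside $\SH{0}{z_i,g_i(z_i)}$ (radius $\sqrt{\lambda_i}\,|z_i|$), on the side of $g_i(z_i)$; hence every $y_i$ on the former satisfies $\HD(y_i,g_i(z_i))<\HD(y_i,z_i)$, so $F_1(y_1)>0$ and $G_1(y_2)>0$ throughout the spine.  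Thus $F_1+G_1>0$ there: every point of $\EK{0}{\vz,\gamma^{2}(\vz)}$ is strictly closer to $\gamma(\vz)$ than to $\vz$, no matter how far off the invariant flat you push $\vz$.  Your appeal to Lemma~\ref{lem:invisibleflats} compounds the problem, since that lemma transfers \emph{invisibility} from the spine to the full hypersurface, not visibility, and the spine computation just made feeds it in the direction opposite to the one you want.  Any genuine witness for a third face must therefore live on some leaf $\EK{k}{\vz,\gamma^{2}(\vz)}$ with $k\neq 0$, which is precisely the regime of intersecting square hyperbolae that the paper's numerical example and Lemmas~\ref{lem:samesign}--\ref{lem:samevalue} are set up to probe; restricting to the spine does not buy the decoupling you hoped for.
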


While this is an ongoing program, we include here some facts which may prove
useful.

 If $\E{\vz,\vw}$ and $\E{\vz',\vw'}$ intersect, then for some $k,k'\in\R$,
 $\EK{k}{\vz,\vw}$ intersects 
 $\EK{k'}{\vz',\vw'}$.  We would like to understand better the nature of
these intersections and, more specifically, intersections between square
hyperbolae.
 
 \begin{figure}
 \includegraphics{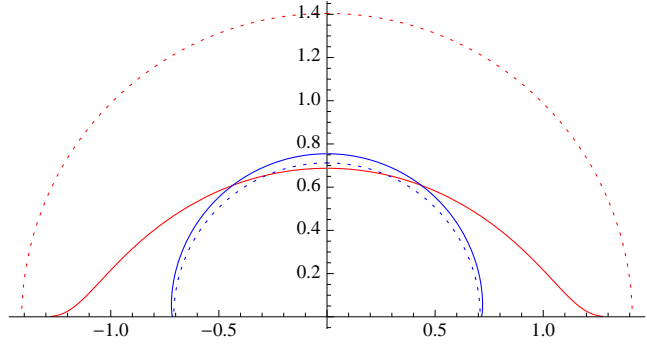}
 \caption{$\SH{k}{ \frac{1}{2} I,I}$ and $\SH{l}{ I,2 I}$, with $k>0$ and $l<0$.
 The dotted lines are the equidistant lines.}
 \label{fig:differentsigns1}
 \end{figure}
 
 \begin{figure}
 \includegraphics{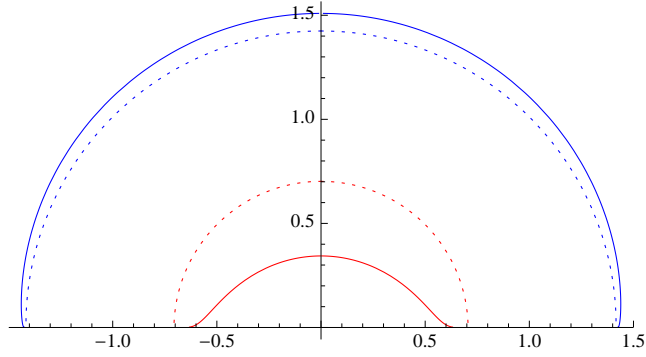}
 \caption{$\SH{k}{ I, 2 I}$ and $\SH{l}{\frac{1}{2} I, I}$, with $k>0$ and
$l<0$.  The dotted lines are the equidistant lines.  The signs of $k,l$ imply
that $\SH{k}{ I, 2 I}$ lies above $\SH{0}{ I, 2 I}$ and $\SH{l}{\frac{1}{2} I,
I}$ lies below $\SH{0}{\frac{1}{2} I, I}$.}
 \label{fig:differentsigns2}
 \end{figure}

We begin with a useful example to keep in mind, in the upper half-plane model.  If $a<b$ are two positive real numbers, the square hyperbola
$\SH{k}{a I,b I}$ lies above the equidistant line $\SH{0}{a I,b I}$ when $k>0$
and below when $k<0$.  In particular, if $a<b<c$ are three positive real numbers, $\SH{l}{a,b}$ and
$\SH{k}{b,c}$ must be disjoint if $k>0$ and $l<0$.  See
Figures~\ref{fig:differentsigns1} and~\ref{fig:differentsigns2}.  
 
 Now suppose: 
 \begin{align*}
 \vx & =(a_1 I,a_2 I) \\
 \vy & =(b_1 I,b_2 I) \\
 \vz & = (c_1 I,c_2 I)
 \end{align*}
 with $0<a_i<b_i<c_i$, $i=1,2$.  If $k>0$ and $l<0$, then by the above discussion
$\SH{l}{a_1 I,b_1 I}$ and $\SH{k}{b_1 I,c_1 I}$ must be disjoint.  If, on the
other hand, $k<0$ and $l>0$, then $\SH{l}{b_2 I,a_2 I}$ and $\SH{k}{c_2 I,b_2
I}$ must be disjoint.  In either case, the following surfaces must be disjoint:
 \begin{align*}
 \EK{l}{\vx,\vy} & =\SH{l}{a_1 I,b_1 I}\times\SH{l}{b_2 I,a_2 I} \\
 \EK{k}{\vy,\vz} &=\SH{k}{b_1 I,c_1 I}\times\SH{k}{c_2 I,b_2 I}
 \end{align*}
 
 We will see now how to generalize this observation.
 
Suppose $l_1,l_2\subset\HP$ is a pair of disjoint straight lines.   (Thus they are
either asymptotic or ultraparallel.)   Since each straight line bounds a half-plane,
the complement of $l_1\cup l_2$ has three components:
\begin{itemize}
 \item two of the components are half-planes, respectively bounded by $l_1$ and $l_2$;
\item the third component is the intersection of the other two half-planes and is bounded by $l_1\cup l_2$.
\end{itemize}
This last component will be called a {\em slab}. 

\begin{defn}
Let $x,y,z\in\HP$; let $l_1$ be the equidistant line between $x$ and $y$ and
$l_2$, the equidistant line between $y$ and $z$.  Suppose $l_1$ and $l_2$ are
disjoint.  We say that $y$ is {\em between} $x$ and $z$ if $y$ belongs to the
slab bounded by $l_1$ and $l_2$.
\end{defn}

For $i=1,2$, denote by $\pi_i$ the projection onto each factor:
\begin{align*}
\pi_i : \BD & \longrightarrow \HP \\
(x_1,x_2) & \longmapsto x_i
\end{align*}

\begin{defn}
 Let $\vx,\vy,\vz\in\BD$.  We say that $\vy$ is {\em between} $\vx$ and $\vz$ if
for each $i=1,2$, $\pi_i(\vy)$ is between $\pi_i(\vx)$ and $\pi_i(\vz)$.
\end{defn}

Note that if $\vy$ is between $\vx$ and $\vz$, then by hypothesis the spines
$\EK{0}{\vx,\vy}$ and $\EK{0}{\vy,\vz}$ are disjoint.

 \begin{lemma}\label{lem:samesign}
 Let $\vx,\vy,\vz\in\BD$ such that $\vy$ is between $\vx$ and $\vz$. Suppose
$\EK{k}{\vx,\vy}$ intersects $\EK{l}{\vy,\vz}$ .  Then $kl>0$.
 \end{lemma}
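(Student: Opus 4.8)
The plan is to reduce the statement to the one-dimensional picture, where it becomes the ``three positive reals'' example made rigorous. Recall that $\EK{k}{\vx,\vy}=\SH{k}{x_1,y_1}\times\SH{k}{y_2,x_2}$ and $\EK{l}{\vy,\vz}=\SH{l}{y_1,z_1}\times\SH{l}{z_2,y_2}$. A point of the intersection is a pair $(p_1,p_2)$ with $p_1\in\SH{k}{x_1,y_1}\cap\SH{l}{y_1,z_1}$ and $p_2\in\SH{k}{y_2,x_2}\cap\SH{l}{z_2,y_2}$. So it suffices to prove: if $y_i$ is between $x_i$ and $z_i$ (in the sense of the slab bounded by the two equidistant lines $\SH{0}{x_i,y_i}$ and $\SH{0}{y_i,z_i}$), then the intersection in the first factor forces $k,l$ to not have strictly opposite signs, and likewise in the second factor. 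Combining the two factors then rules out $k>0,l<0$ using one factor and $k<0,l>0$ using the other, and the degenerate cases $k=0$ or $l=0$ are excluded because e.g. $\SH{0}{x_i,y_i}$ lies on the boundary of the slab while any $\SH{l}{y_i,z_i}$ with $l\neq 0$ stays in the open half-plane on the $z_i$-side, and symmetrically; hence $kl>0$.

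The core one-variable claim I would isolate is the following: \emph{let $a,b,c\in\HP$ with the equidistant lines $l_{ab}=\SH{0}{a,b}$ and $l_{bc}=\SH{0}{b,c}$ disjoint, and $b$ in the slab between them; if $k>0$ and $l<0$ then $\SH{k}{a,b}$ and $\SH{l}{b,c}$ are disjoint.} To prove it I would show that $\SH{k}{a,b}$ with $k>0$ lies entirely in the open half-plane bounded by $l_{ab}$ on the $a$-side (the side \emph{not} containing $b$), since $k>0$ means $\HDS(x,a)>\HDS(x,b)$, i.e.\ $x$ is strictly closer to $b$ than to $a$ --- wait, I must fix the sign convention carefully: $\SH{k}{a,b}=\{x:\HDS(x,a)-\HDS(x,b)=k\}$, so $k>0$ puts $x$ strictly on the $b$-far side of $l_{ab}$, which is the side of $l_{ab}$ \emph{not} containing $b$. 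Similarly $\SH{l}{b,c}$ with $l<0$ means $\HDS(x,b)<\HDS(x,c)$, so $x$ is strictly on the $c$-far side of $l_{bc}$, i.e.\ the side \emph{not} containing $c$, which is the side containing $b$'s slab-region and beyond. The point is that these two half-planes are on opposite sides of the slab and are disjoint because $b$ lies in the slab and the slab is exactly the intersection of the two half-planes containing $b$; each $\SH{}{}$ curve lies in the complementary open half-plane, and those two complementary half-planes are disjoint precisely when $l_{ab}$ and $l_{bc}$ are disjoint and $b$ is between. This is a purely combinatorial statement about three regions in $\HP$ once the curves are shown to lie strictly on one side.

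The genuinely non-trivial ingredient, and the one I expect to be the main obstacle, is showing that $\SH{k}{a,b}$ lies entirely on one side of its own equidistant line $\SH{0}{a,b}=l_{ab}$, i.e.\ that the region $\{x:\HDS(x,a)-\HDS(x,b)>k\}$ for $k>0$ is contained in $\{x:\HDS(x,a)-\HDS(x,b)>0\}$ --- this last containment is trivially true as sets, so the real content is just that the \emph{curve} $\SH{k}{a,b}$ does not meet $l_{ab}$, which is immediate since on $l_{ab}$ the difference is $0\neq k$. So actually the side-of-its-own-line statement is free. What remains is the geometric fact that the half-plane $H_{ab}^+=\{\HDS(x,a)-\HDS(x,b)>0\}$ (the $b$-far side of $l_{ab}$) and the half-plane $H_{bc}^-=\{\HDS(x,b)-\HDS(x,c)<0\}$ (the $c$-far side of $l_{bc}$) are disjoint. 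Both of these are open half-planes bounded by $l_{ab}$ and $l_{bc}$ respectively; $H_{ab}^+$ does not contain $b$, and $H_{bc}^-$ \emph{does} contain $b$ (since $\HDS(b,b)-\HDS(b,c)<0$) together with the whole slab on $b$'s side; since the slab is $H_{ab}^-\cap H_{bc}^+$ wait I need $H_{bc}^-\supset$ slab. Let me just say: the slab is the component of $\HP\setminus(l_{ab}\cup l_{bc})$ containing $b$; $H_{ab}^+$ is a different component (the half-plane beyond $l_{ab}$ away from $b$), hence disjoint from the slab, and since $l_{ab},l_{bc}$ disjoint, $H_{ab}^+$ is disjoint from $l_{bc}$ and from the $c$-side component too, so $H_{ab}^+\subset H_{bc}^+$; therefore $H_{ab}^+\cap H_{bc}^-=\emptyset$. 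I would write this last paragraph carefully with the three components of $\HP\setminus(l_{ab}\cup l_{bc})$ named explicitly, as already set up in the excerpt before the definition of ``between,'' and then feed the two factors of the bidisk product into it, handling $k=0$ and $l=0$ by the same half-plane-versus-boundary-line argument, to conclude $kl>0$.
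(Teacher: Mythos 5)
Your overall strategy coincides with the paper's --- project to the two factors, use the three components of the complement of the two equidistant lines, note that each square hyperbola lies strictly on one side of its own equidistant line, and kill one sign configuration per factor --- but your execution contains a sign error that makes your ``core one-variable claim'' false. With the convention $\SH{k}{a,b}=\{x:\HDS(x,a)-\HDS(x,b)=k\}$, a point with $k>0$ satisfies $\HDS(x,a)>\HDS(x,b)$, i.e.\ it is strictly closer to $b$, hence lies in the open half-plane bounded by $l_{ab}$ that \emph{contains} $b$ (the side containing the slab), not the side away from $b$. Your own formulas show this: $H_{ab}^{+}=\{x:\HDS(x,a)-\HDS(x,b)>0\}$ contains $b$, since $\HDS(b,a)-\HDS(b,b)=\HDS(b,a)>0$; so the assertions ``$H_{ab}^{+}$ does not contain $b$'' and ``$H_{ab}^{+}\cap H_{bc}^{-}=\emptyset$'' are both wrong --- that intersection contains $b$ and the whole slab. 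Consequently the claim that $k>0$ and $l<0$ force $\SH{k}{a,b}\cap\SH{l}{b,c}=\emptyset$ is false: both curves then lie on the slab side of their respective equidistant lines, and they do meet for suitable $k,l$ (this is exactly the configuration of Figure~\ref{fig:differentsigns1}, namely $\SH{k}{\frac{1}{2}I,I}$ and $\SH{l}{I,2I}$ with $k>0$, $l<0$; the Remark following the lemma records that the conclusion fails when the order of the points is reversed, which is what your sign flip amounts to).

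The repair is to swap which factor handles which case. In the first factor, $\SH{k}{x_1,y_1}$ with $k<0$ lies in the outer half-plane containing $x_1$, while $\SH{l}{y_1,z_1}$ with $l>0$ lies in the outer half-plane containing $z_1$; these are distinct components of the complement, hence disjoint, so the first factor rules out $k<0$, $l>0$. The case $k>0$, $l<0$ must then be excluded in the second factor, where the arguments appear in reversed order: $\EK{k}{\vx,\vy}$ has second component $\SH{k}{y_2,x_2}$, so $k>0$ places that curve in the outer half-plane containing $x_2$, and $\SH{l}{z_2,y_2}$ with $l<0$ lies in the outer half-plane containing $z_2$. Your treatment of $k=0$ or $l=0$ needs the same correction ($\SH{l}{y_1,z_1}$ with $l<0$ does \emph{not} avoid the line $\SH{0}{x_1,y_1}$; that subcase must also be sent to the other factor). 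With these signs fixed, your argument becomes the paper's proof.
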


\begin{rem}
 The order of the points is important: the conclusion is false, for instance,
for $\EK{k}{\vy,\vx}$ and $\EK{l}{\vy,\vz}$.
\end{rem}

\begin{proof}
 Since $\vy$ is between $\vx$ and $\vz$, $kl\neq 0$ by definition.  

Write $\vx=(x_1,x_2),\vy=(y_1,y_2),\vz=(z_1,z_2)$.  Then:
\begin{align*}
 \EK{k}{\vx,\vy} & =\SH{k}{x_1,y_1}\times\SH{k}{y_2,x_2} \\
\EK{l}{\vy,\vz} & =\SH{l}{y_1,z_1}\times\SH{l}{z_2,y_2}
\end{align*}
Consider first, in $\pi_1(\BD)$, the complement of $\SH{0}{x_1,y_1}$ and
$\SH{0}{y_1,z_1}$, respectively the equidistant lines between $x_1$ and $y_1$,
and $y_1$ and $z_1$.  As noted above, two of these components are half-planes
and, $y_1$ being between $x_1$ and $z_1$, one half-plane contains $x_1$ and the
other, $z_1$.  Therefore, the first half-plane contains every square hyperbola
$\SH{k}{x_1,y_1}$ where $k<0$, and the second half-plane contains every
$\SH{l}{y_1,z_1}$ where $l>0$.  Therefore, if $k<0$ and $l>0$, $\EK{k}{\vx,\vy}$
and $\EK{l}{\vy,\vz}$ must be disjoint.

Analogously, in $\pi_2(\BD)$, the complement of the equidistant lines between
$x_2$ and $y_2$, and $y_2$ and $z_2$ consists of three components, two of which
are half-planes containing, respectively, every square hyperbola
$\SH{k}{y_2,x_2}$ where $k>0$ and every $\SH{l}{z_2,y_2}$ where $l<0$.
Therefore, if $k>0$ and $l<0$, $\EK{k}{\vx,\vy}$ and $\EK{l}{\vy,\vz}$ must be
disjoint.

\end{proof}

 \begin{lemma}\label{lem:samevalue}
 Let $\vx=(x_1,x_2)$, $\vy=(y_1,y_2)$ and $\vz=(z_1,z_2)\in\BD$ such that $\vy$
is between $\vx$ and $\vz$ and suppose $\EK{k}{\vx,\vy}$ intersects
$\EK{l}{\vy,\vz}$ .  Then there exists $m\in\R$ such that:
 \begin{itemize}
 \item either $\SH{m}{x_1,y_1}\cap\SH{m}{y_1,z_1}\neq\emptyset$, or
 \item  $\SH{m}{y_2,x_2}\cap\SH{m}{z_2,y_2}\neq\emptyset$.
 \end{itemize}
 \end{lemma}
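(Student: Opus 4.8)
The plan is to reduce the statement to a one-dimensional fact about square hyperbolae using the product structure of $\EK{k}{\cdot,\cdot}$ together with the sign constraint from Lemma~\ref{lem:samesign}. Suppose $\vp=(p_1,p_2)\in\EK{k}{\vx,\vy}\cap\EK{l}{\vy,\vz}$. Unwinding the definitions, this says simultaneously that $p_1\in\SH{k}{x_1,y_1}\cap\SH{l}{y_1,z_1}$ and $p_2\in\SH{k}{y_2,x_2}\cap\SH{l}{z_2,y_2}$. By Lemma~\ref{lem:samesign} we know $kl>0$, so $k$ and $l$ are nonzero and of the same sign; without loss of generality assume both are positive (the case $k,l<0$ is symmetric, exchanging the roles of the two factors as in the proof of Lemma~\ref{lem:samesign}).

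First I would fix the first factor and consider, for a real parameter $s$ ranging between $k$ and $l$, the pair of curves $\SH{s}{x_1,y_1}$ and $\SH{s}{y_1,z_1}$. At $s=k$ the point $p_1$ lies on $\SH{k}{x_1,y_1}$, and since $y_1$ is between $x_1$ and $z_1$, the curve $\SH{k}{x_1,y_1}$ lies in the (closed) half-plane bounded by $\SH{0}{x_1,y_1}$ on the $x_1$-side while $\SH{k}{y_1,z_1}$ lies on the $z_1$-side of $\SH{0}{y_1,z_1}$; so a priori there is no forced intersection. The idea is instead to use $p_1\in\SH{l}{y_1,z_1}$: as $s$ varies from $l$ down to $k$ (or up, depending on the ordering), the family $\SH{s}{y_1,z_1}$ sweeps monotonically, and likewise $\SH{s}{x_1,y_1}$ sweeps monotonically in the opposite direction; at one end the configuration ``$p$-type point on the $x$-curve'' holds and at the other end ``$p$-type point on the $z$-curve'' holds. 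More precisely, I would look at the continuous function $s\mapsto \HDS(p_1,x_1)-\HDS(p_1,y_1) - s$, which vanishes at $s=k$, and $s\mapsto \HDS(p_1,y_1)-\HDS(p_1,z_1)-s$, which vanishes at $s=l$; combining these with the analogous functions in the second variable evaluated at $p_2$, one obtains a single continuous real-valued function of $s$ on the interval $[\min(k,l),\max(k,l)]$ that changes sign, forcing a value $m$ where the required coincidence holds in one of the two factors.

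Concretely, set $F_1(s)=\big(\HDS(p_1,x_1)-\HDS(p_1,y_1)-s\big)-\big(\HDS(p_1,y_1)-\HDS(p_1,z_1)-s\big)=\HDS(p_1,x_1)-2\HDS(p_1,y_1)+\HDS(p_1,z_1)$, which is a constant — so this particular combination is too crude. The correct bookkeeping is to track each curve separately: since $p_1\in\SH{k}{x_1,y_1}$ and $p_1\in\SH{l}{y_1,z_1}$, the real number $m:=\HDS(p_1,x_1)-\HDS(p_1,y_1)=k$ and the real number $m':=\HDS(p_1,y_1)-\HDS(p_1,z_1)=l$ may differ. What we need is a single $m$ lying on \emph{both} $\SH{m}{x_1,y_1}$ and $\SH{m}{y_1,z_1}$, i.e.\ a point on the first factor with $\HDS(\cdot,x_1)-\HDS(\cdot,y_1)=\HDS(\cdot,y_1)-\HDS(\cdot,z_1)$; the locus of such points is a curve in $\HP$ (the ``square-hyperbola locus'' of the ordered triple $(x_1,y_1,z_1)$), and similarly in the second factor we have the curve where $\HDS(\cdot,y_2)-\HDS(\cdot,x_2)=\HDS(\cdot,z_2)-\HDS(\cdot,y_2)$. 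So I would argue that as $k,l$ both run through positive values the family of pairs $\big(\SH{k}{x_1,y_1}\cap\SH{l}{y_1,z_1},\ \SH{k}{y_2,x_2}\cap\SH{l}{z_2,y_2}\big)$ cannot be nonempty for all parameter choices without one of these two ``diagonal'' curves being met; formally, restrict attention to the point $\vp$ at hand, look at the two numbers $\delta_1=k-l$ computed from the first factor and $\delta_2$ computed from the second factor (from $p_2\in\SH{k}{y_2,x_2}\cap\SH{l}{z_2,y_2}$ one gets $\HDS(p_2,y_2)-\HDS(p_2,x_2)=k$ and $\HDS(p_2,z_2)-\HDS(p_2,y_2)=l$, hence the \emph{same} $\delta_2=k-l$), and observe that $\delta_1=\delta_2=k-l$; then apply the intermediate value theorem to a deformation of $\vp$ along $\E{\vx,\vy}\cap\E{\vy,\vz}$ (which contains $\vp$) on which $k$ decreases and $l$ increases, until $k=l=m$, at which stage the surviving intersection point lies in $\SH{m}{x_1,y_1}\cap\SH{m}{y_1,z_1}$ or in $\SH{m}{y_2,x_2}\cap\SH{m}{z_2,y_2}$.

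The step I expect to be the main obstacle is justifying that such a deformation exists and stays inside the intersection $\E{\vx,\vy}\cap\E{\vy,\vz}$ all the way to the coincidence $k=l$: one must rule out the possibility that the component of the intersection through $\vp$ ``escapes to infinity'' in the $(k,l)$-plane before reaching the diagonal $k=l$. Here I would use that each $\SH{k}{\cdot,\cdot}$ has the same endpoints as its equidistant line (the Lemma proved above), so the curves in each factor vary in a controlled, nested way, and the sign lemma pins the relevant quadrant; combined with the monotone dependence of $\SH{k}{\cdot,\cdot}$ on $k$, this should confine the intersection locus to a compact region of the parameter plane whose boundary meets the diagonal, giving the desired $m$.
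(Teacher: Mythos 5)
Your setup is fine (extracting $p_1\in\SH{k}{x_1,y_1}\cap\SH{l}{y_1,z_1}$ and $p_2\in\SH{k}{y_2,x_2}\cap\SH{l}{z_2,y_2}$ from a point of $\EK{k}{\vx,\vy}\cap\EK{l}{\vy,\vz}$, invoking Lemma~\ref{lem:samesign}, and reducing to $k,l>0$ via $\bi$), but the step that is supposed to produce $m$ is never carried out, and you flag this yourself: everything rests on deforming $\vp$ inside $\E{\vx,\vy}\cap\E{\vy,\vz}$ until the two levels coincide. There is no reason the connected component of that intersection through $\vp$ reaches the diagonal $k=l$: the difference of the two level functions could be bounded away from $0$ on it, or the component could even be a single point, and nothing you cite (nestedness, common endpoints) rules this out. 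Understanding the global structure of $\E{\vx,\vy}\cap\E{\vy,\vz}$ is exactly what this lemma is designed to avoid. Note also that your deformation, if it worked, would prove the stronger statement that $\EK{m}{\vx,\vy}$ meets $\EK{m}{\vy,\vz}$, whereas the lemma only asserts a coincidence in \emph{one} of the two factors; that asymmetry is a hint that the right argument is factor-by-factor. So this is a genuine gap, not a missing detail.

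The missing idea is to stay in a single factor and use the foliation of a half-plane by the curves $\SH{m}{x_1,y_1}$. Suppose $k\le l$ (the case $k>l$ is symmetric, carried out in the second factor with $m=k$). The curves $\SH{m}{x_1,y_1}$, $m>0$, all share the ideal endpoints of $\SH{0}{x_1,y_1}$ (by the endpoint lemma) and foliate the open half-plane $H$ bounded by $\SH{0}{x_1,y_1}$ on the $y_1$-side; since $y_1$ is between $x_1$ and $z_1$ and $l>0$, the curve $\SH{l}{y_1,z_1}$ lies in $H$. It meets the leaf of level $k$ at $p_1$, while its two ends converge to the ideal endpoints of $\SH{0}{y_1,z_1}$, which lie in the closure of the union of leaves of level greater than $k$; hence the leaf-level function $\HDS(\cdot,x_1)-\HDS(\cdot,y_1)$ restricted to $\SH{l}{y_1,z_1}$ takes the value $k$ and becomes arbitrarily large, so by the intermediate value theorem it takes every value $m\ge k$, in particular $m=l$. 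This gives $\SH{l}{x_1,y_1}\cap\SH{l}{y_1,z_1}\neq\emptyset$ directly, with no deformation of $\vp$ at all. You were one step away from this when you recorded $p_1\in\SH{k}{x_1,y_1}\cap\SH{l}{y_1,z_1}$ but then set the single-factor picture aside as giving ``no forced intersection.''
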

 \begin{proof}
 By Lemma~\ref{lem:samesign}, $kl>0$.   We may assume without loss of generality
that both $k$ and $l$ are positive.  Indeed, if necessary, consider the images
of the points by $\bi$, using Equation~\eqref{eq:iotaofE} and noting that $\vy$
is in between $\vx$ and $\vz$ if and only if $\bi(\vy)$ is in between $\bi(\vx)$
and $\bi(\vz)$.

{\bf Case 1:} $k<l$.  Denote by $H$ the half-plane bounded by $\SH{0}{x_1,y_1}$
containing $\SH{k}{x_1,y_1}$.  Then $H$ contains $\SH{l}{y_1,z_1}$ as well.  The
complement of $\SH{k}{x_1,y_1}$ in $H$ consists of two components: one
containing all square hyperbolae $\SH{m}{x_1,y_1}$ with $0< m<k$, and one
containing all $\SH{m}{x_1,y_1}$ where $m>k$.  In this second component, each
$\SH{m}{x_1,y_1}$ must intersect $\SH{l}{y_1,z_1}$.  Setting $m=l$ will yield the result.
 
 {\bf Case 2:} $k>l$.   A similar argument holds in $\pi_2(\BD)$ : for $m>l$,
$\SH{m}{z_2,y_2}$ intersects $\SH{l}{y_2,x_2}$.  Setting $m=k$ will yield the result.

 \end{proof}

We can use Lemmas~\ref{lem:samesign} and~\ref{lem:samevalue} to prove Theorem~\ref{thm:main} in a different manner.  For suppose
$\vw\in\E{\vz,\gamma(\vz)}\cap\E{\vz,\gamma^{-1}(\vz)}$, 
where $\gamma=(g_1,g_2)$ is a product of hyperbolic isometries and $\vz=(z_1,z_2)$ belongs to its invariant flat.  
Without loss of generality, let $m\in\R$ and $w\in\HP$ such that:
\begin{equation*}
 w\in\SH{m}{g_1(z_1),z_1}\cap\SH{m}{z_1,g_1^{-1}(z_1)}
\end{equation*}
 Then both $w$ and $g_1(w)$ belong to $\SH{m}{g_1(z_1),z_1}$.  
 This means that there is a ray $y=\kappa x$ (identifying $\HP$ with the upper half of the $(x,y)$-plane) which intersects the square hyperbola in at least two points.
However, a straightforward calculation using the parametrization $x(t)+y(t)I$ in Equation~\eqref{eq:SHparametrization} 
shows that the function $x(t)/y(t)$ is strictly increasing, contradicting the hypothesis.  One can see, at least numerically as in Figure
~\ref{fig:offaxis}, 
that this fails when $\vz$ is no longer assumed to belong to the $\gamma$-invariant flat.

\begin{figure}
 \includegraphics{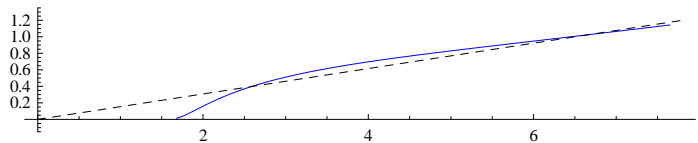}
 \caption{A square hyperbola $\SH{m}{ 1+\frac{1}{4}I, 2+\frac{1}{2}I}$, shown as a thick curve, is intersected in two points by the dashed 
ray 
$y=\kappa x$, with $\kappa$ approximately equal to $0.15$.}
 \label{fig:offaxis}
 \end{figure}


\bibliographystyle{amsplain}
\bibliography{Vref.bib}

\providecommand{\bysame}{\leavevmode\hbox to3em{\hrulefill}\thinspace}
\providecommand{\MR}{\relax\ifhmode\unskip\space\fi MR }
\providecommand{\MRhref}[2]{%
  \href{http://www.ams.org/mathscinet-getitem?mr=#1}{#2}
}
\providecommand{\href}[2]{#2}
\begin{thebibliography}{1}

\bibitem{BH}
Martin~R. Bridson and Andr{\'e} Haefliger, \emph{Metric spaces of non-positive
  curvature}, Grundlehren der Mathematischen Wissenschaften [Fundamental
  Principles of Mathematical Sciences], vol. 319, Springer-Verlag, Berlin,
  1999.

\bibitem{Bu}
Herbert Busemann, \emph{The geometry of geodesics}, Pure and Applied
  Mathematics, Academic Press, 1955.

\bibitem{DP}
Todd~A. Drumm and Jonathan~A. Poritz, \emph{{F}ord and {D}irichlet domains for
  cyclic subgroups of {$PSL_2(C)$} acting on {$H^3_R$} and {$\partial H^3_R$}},
  Conform. Geom. Dyn. \textbf{3} (1999), 116--150.

\bibitem{EI}
Paul~E. Ehrlich and Hans-Christoph Im~Hof, \emph{Dirichlet regions in manifolds
  without conjugate points}, Comment. Math. Helv. \textbf{54} (1979), no.~4,
  642--658.

\bibitem{EF}
Alex Eskin and Benson Farb, \emph{Quasi-flats in {$H2\times H2$}}, Lie groups
  and ergodic theory, Tata Inst. Fund. Res. Stud. Math., vol.~14, Tata Inst.
  Fund. Res., 1998, pp.~75--103.

\bibitem{Go99}
William~M. Goldman, \emph{Complex hyperbolic geometry}, Oxford Mathematical
  Monographs, Oxford University Press, 1999.

\bibitem{H}
Sigurdur Helgason, \emph{Differential geometry, {L}ie groups, and symmetric
  spaces}, Graduate Studies in Mathematics, vol.~34, American Mathematical
  Society, 2001.

\bibitem{J}
Troels J{\o}rgensen, \emph{On cyclic groups of {M}oebius transformations},
  Math. Scand. \textbf{33} (1973), 250--260.

\bibitem{Ph}
Mark~B. Phillips, \emph{Dirichlet polyhedra for cyclic groups in complex
  hyperboolic space}, Proc. A.M.S. \textbf{115} (1992), no.~1, 221--228.

\end{thebibliography}
 
\end{document}